\def\blfootnote{\gdef\@thefnmark{}\@footnotetext}
\renewcommand\mathcal{\mathscr}
\renewcommand{\emph}{\normalem}
\theoremstyle{plain}
\newtheorem{theorem}{Theorem}[section]
\newtheorem*{theorem*}{Theorem}
\newtheorem{lemma}[theorem]{Lemma}
\newtheorem*{lemma*}{Lemma}
\newtheorem{corollary}[theorem]{Corollary}
\newtheorem{proposition}[theorem]{Proposition}
\theoremstyle{remark}
\newtheorem{remark}[theorem]{Remark}
\newtheorem*{remark*}{Remark}
\theoremstyle{definition}
\newtheorem{definition}[theorem]{Definition}
\newtheorem*{definition*}{Definition}
\theoremstyle{notation}
\newtheorem*{notation*}{Notation}
\newtheorem{Basic assumptions}[theorem]{Basic assumptions}
\numberwithin{equation}{section}
\newcommand\quant{\advance\quantno by1
                      \ifnum\quantno=1\qquad\else\quad\fi\forall }
\newcommand\itemno[1]{(\romannumeral #1)}
\newcommand\Dom{\mathrm{Dom}}
\newcommand\Ran{\mathrm{Ran}}
\newcommand\rest[1]{\kern-.1em
          \lower.5ex\hbox{$\scriptstyle #1$}\kern.05em}
\newcommand\set[1]{{\left\{#1\right\}}}
\renewcommand\mod[1]{\vert{#1}\vert}
\newcommand\bigmod[1]{\bigl\vert{#1}\bigr|}
\newcommand\Bigmod[1]{\Bigl\vert{#1}\Bigr|}
\newcommand\norm[2]{{\Vert{#1}\Vert_{#2}}}
\newcommand\bignorm[2]{\left.{\bigl\Vert{#1}\bigr\Vert_{#2}}\right.}
\newcommand\bigopnorm[2]{\bigl|\!\bigl|\!\bigl| {#1} 
\bigr|\!\bigr|\!\bigr|_{#2}}
\newcommand\smallfrac[2]{\mbox{\small$\displaystyle\frac{#1}{#2}$}}
\newcommand\wrt{\,\text{\rm d}}
\newcommand\BR{\mathbb{R}}
\newcommand\cB{\mathcal{B}}   
\newcommand\cC{\mathcal{C}}
\newcommand\cH{\mathcal{H}}   
\newcommand\cI{\mathcal{I}}
\newcommand\cL{\mathcal{L}}
\newcommand\al{\alpha}
\newcommand\be{\beta}
    \newcommand\Ga{\Gamma}
  \newcommand\vep{\varepsilon}
\newcommand\la{\lambda}   
    \newcommand\Om{\Omega}
\newcommand\vp{\varphi}
\newcommand\OV{\overline}
\newcommand\funnyk{k\hbox to 0pt{\hss\phantom{g}}}
\newcommand\lu[1]{L^1(#1)}
\newcommand\lp[1]{L^p(#1)}
\newcommand\ld[1]{L^2(#1)}
\newcommand\ly[1]{L^\infty(#1)}
\newcommand\Xu[1]{X^1(#1)}
\newcommand\Xh[1]{X^k(#1)}
\newcommand\whH{\widehat{\phantom{G}}\hbox to 0pt{\hss $H$}}
\newcommand\emspace{\hbox to 6pt{\hss}}
\newcommand\ds{\displaystyle}
\newcommand\rmi{\hbox{\rm (i)}}
\newcommand\rmii{\hbox{\rm (ii)}}
\newcommand\rmiii{\hbox{\rm (iii)}}
\newcommand\ioty{\int_0^{\infty}}
\newcommand\dtt[1]{\,\frac{\mathrm {d} #1}{ #1}}
\newcommand\One{{\mathbf{1}}}
\newcommand\e{\mathrm{e}}
\newcommand\Inj{\mathrm{Inj}}
\newcommand\dest{\text{\rm d}}
\newcommand\Ric{\mathop{\rm Ric}}
\newcommand\supp{\mathrm{supp}}
\DeclareSymbolFont{EUEX}{U}{euex}{m}{n}
\DeclareSymbolFont{euexlargesymbols}{U}{euex}{m}{n}
\DeclareMathSymbol{\intop}{\mathop}{euexlargesymbols}{"52}
     \def\int{\intop\nolimits}
\DeclareSymbolFont{euexsymbols}     {U}{euex}{m}{n}
\DeclareMathSymbol{\smallint}{\mathop}{euexsymbols}{"52}
\begin{document}

\title[Sharp boundedness]
{Sharp endpoint results \\
for imaginary powers and Riesz transforms \\
on certain noncompact manifolds}

\subjclass[2000]{} 

\keywords{Hardy space, atom, noncompact manifolds, exponential growth,
Bergman space, quasi-harmonic function, imaginary powers, Riesz transforms.}

\thanks{Work partially supported by PRIN 2010 ``Real and complex manifolds: 
geometry, topology and harmonic analysis".}

\author[G. Mauceri, S. Meda and M. Vallarino]
{Giancarlo Mauceri, Stefano Meda and Maria Vallarino}

\address{Giancarlo Mauceri: Dipartimento di Matematica\\ 
Universit\`a di Genova\\
via Dodecaneso 35\\ 16146 Genova\\ Italy 
-- mauceri@dima.unige.it}

\address{Stefano Meda: 
Dipartimento di Matematica e Applicazioni
\\ Universit\`a di Milano-Bicocca\\
via R.~Cozzi 53\\ I-20125 Milano\\ Italy
-- stefano.meda@unimib.it}

\address{Maria Vallarino:
Dipartimento di Scienze Matematiche
\\ Politecnico di Torino\\
corso Duca degli Abruzzi 24\\ 10129 Torino\\ Italy
 --  maria.vallarino@polito.it}

\begin{abstract}
In this paper we consider a complete connected noncompact 
Riemannian manifold $M$ with bounded geometry and spectral gap.  
We prove that the imaginary powers of the Laplacian and the Riesz transform are bounded from the Hardy space $X^1(M)$, introduced in previous work of the authors, to $L^1(M)$.  
\end{abstract}
\maketitle


\setcounter{section}{0}
\section{Introduction} \label{s:Introduction}

Denote by $M$ a complete connected noncompact Riemannian manifold
of dimension $n$
with Ricci curvature bounded from below, positive injectivity radius 
and spectral gap.  Denote by $\cL$ (minus) the Laplace--Beltrami operator
on $M$.  Denote by $\Xh{M}$ the Hardy-type spaces introduced in 
\cite{MMV1,MMV2}.  The purpose of this paper is to prove the following result.

\begin{theorem} \label{t: main}
For every $u$ in $\BR$ the operators $\cL^{iu}$ and $\nabla\cL^{-1/2}$ 
are bounded from $\Xu{M}$ to $\lu{M}$.
\end{theorem}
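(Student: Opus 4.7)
The plan is to exploit the atomic structure of $\Xu{M}$. Since that space admits atomic decompositions with absolutely summable coefficients, the theorem reduces to a uniform estimate
\begin{equation*}
\norm{\cL^{iu} a}{\lu{M}} + \norm{\nabla \cL^{-1/2} a}{\lu{M}} \le C
\end{equation*}
over all atoms $a$ of $\Xu{M}$, with $C$ allowed to depend on $u$. Fix such an atom $a$ supported in a ball $B = B(c_B, r_B)$ of radius at most the localisation scale built into the definition of $\Xu{M}$, and let $T$ denote either operator. Split
\begin{equation*}
\norm{Ta}{\lu{M}} \le \int_{\gamma B} \abs{Ta} \wrt \mu + \int_{M \setminus \gamma B} \abs{Ta} \wrt \mu
\end{equation*}
for a suitable dilation $\gamma > 2$. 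The local term is $O(1)$ by the Cauchy--Schwarz inequality, the $\ld{M}$-boundedness of $T$ (spectral for $\cL^{iu}$, and an immediate consequence of the identity $\norm{\nabla \cL^{-1/2} f}{2} = \norm{f}{2}$ for the Riesz transform), the size normalisation of $a$, and the local doubling property provided by bounded geometry.

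For the global term, represent $T$ via the heat semigroup,
\begin{equation*}
\cL^{iu} = \frac{1}{\Ga(1-iu)} \int_0^\infty t^{-iu} \cL \e^{-t\cL} \wrt t, \qquad \nabla \cL^{-1/2} = \frac{1}{\sqrt{\pi}} \int_0^\infty t^{-1/2} \nabla \e^{-t\cL} \wrt t,
\end{equation*}
and split the time integral at $t = r_B^2$. On $t \le r_B^2$, the pointwise Gaussian bounds on $\kerHt$ and its gradient---available under bounded geometry---combine with a Taylor expansion of the integral kernel around $c_B$ and the cancellation condition of $a$ to yield an integrable tail, decaying like $(1 + d(x, c_B))^{-n-\epsilon}$ over $M \setminus \gamma B$. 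On $t \ge r_B^2$, the spectral gap $b > 0$ produces the exponential decay $\norm{\e^{-t\cL}}{\ld{M} \to \ld{M}} \le \e^{-bt}$; estimating the $\lu{M \setminus \gamma B}$-norm on dyadic annuli around $c_B$ by Cauchy--Schwarz costs an annular volume factor which, under bounded geometry, grows only exponentially in the annular radius and is dominated by $\e^{-bt}$, yielding a summable bound after integration over $t$ and over annuli.

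The main obstacle is the long-time regime. Unlike in Euclidean space, the heat kernel on $M$ does not enjoy pointwise Gaussian decay that outpaces the exponential volume growth of balls, so the $\lu{M}$ estimate for large $t$ cannot be obtained kernel by kernel but must be assembled from $\ld{M}$ operator bounds together with annular volume counts; the spectral gap is precisely the hypothesis that makes this balance close. For $\cL^{iu}$ a secondary technicality is the merely conditional convergence of the subordination integral at $t = 0$, which is resolved by an integration by parts in $t$, the resulting boundary term being absorbed by the cancellation of $a$.
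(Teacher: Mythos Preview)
Your proposal has a real gap in the long-time regime, and it overlooks the feature of $X^1$-atoms that the paper's argument is built around.

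In your large-$t$ step you assert that the spectral gap gives $\norm{\e^{-t\cL}}{2\to 2}\le \e^{-bt}$ and that the annular volume factor ``is dominated by $\e^{-bt}$''. But $\e^{-bt}$ is decay in the \emph{time} variable, whereas the annular volume $\mu(A_k)\asymp \e^{2\be r_k}$ grows in the \emph{spatial} variable $r_k$; these do not compete directly. Coupling them requires off-diagonal heat bounds, and even the sharp estimate \eqref{f: heat upper estimate} yields, after optimising in $t$, spatial decay of order $\e^{-c\sqrt{b}\,r}$ only; since $b\le\be^2$ by Brooks, this does not in general beat $\e^{2\be r}$. Concretely, for $\cL^{iu}$ your representation leaves, after the integration by parts you mention, a term dominated by $\int_{r_B^2}^\infty t^{-1}\int_{(\ga B)^c} h_t(x,y)\wrt\mu(x)\wrt t$; since $\int_{(\ga B)^c} h_t(\cdot,y)\wrt\mu$ does not tend to zero as $t\to\infty$, this integral diverges. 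Your short-time claim also fails as written: a tail decaying like $(1+d(x,c_B))^{-n-\vep}$ is not integrable on a manifold of exponential volume growth.

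The missing idea is that an $X^1$-atom $A$ is orthogonal not merely to constants but to all of $q^2(\OV B)$. The paper exploits this through Corollary~\ref{c: action on atoms}: $\cL^{-1}A$ is again supported in $\OV B$, with $\norm{\cL^{-1}A}{2}\le \la_1(B)^{-1}\mu(B)^{-1/2}$. One then writes $\cL^{iu}A=\cL^{iu+1}(\cL^{-1}A)$ and $\nabla\cL^{-1/2}A=\nabla\cL^{1/2}(\cL^{-1}A)$. The extra factor of $\cL$ shifts the time weight in the heat representation to $t^{-2}$ (resp.\ $t^{-3/2}$), so the large-$t$ global contribution is controlled by $\int_{r_B^2}^\infty t^{-2}\,\norm{h_t(\cdot,y)}{1}\wrt t\le r_B^{-2}$ using only the contractivity of the heat semigroup on $\ly{M}$; the spectral-gap decay of the semigroup is not invoked in the global estimate at all. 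The resulting factor $r_B^{-2}$ is then cancelled by the gain $\la_1(B)^{-1}\asymp r_B^2$ from $\cL^{-1}A$. For the Riesz transform the large-$t$ gradient piece still needs work (Caccioppoli plus local parabolic Harnack, see Lemma~\ref{l: local-global}), but again the mechanism is the extra power of $\cL$, not semigroup decay.
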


In \cite{MMV1,MMV2} we proved that the operators $\cL^{iu}$ and $\nabla\cL^{-1/2}$
are bounded from $\Xh{M}$ to $\lu{M}$ for an integer $k$ large enough
and depending on $n$.  Clearly Theorem~\ref{t: main} is an improvement
of the aforementioned results.  We believe that its main interest lies not
only in the fact that all these operators are bounded from the
same space $\Xu{M}$ to $\lu{M}$, but also in the method of proof,
which appear to be quite adaptable to the geometry of manifolds and could pave the way to obtaining similar results for more general manifolds.

The imaginary powers of $\cL$ and the Riesz transforms on Riemannian manifolds have been investigated in a number of papers \cite{A1, A2, ACDH, AMR, CMM1,CMM2, CD, DY, HLMMY, I, MRu, MMV1, MMV2, MV, Ru, T}. 
For a discussion of these papers and their  relations to our results we refer the reader to the introductions of \cite{MMV1, MMV2}.
\par
We now give a brief outline of the paper. In Section \ref{s: Background material} we recall the definition and the basic properties of the atomic Hardy space $X^1(M)$. In Section \ref{s: Atoms and the LB} we estimate the $L^2$ norm of the resolvent  of the Laplacian $\cL$ on atoms. In Section \ref{s: IP} we prove the boundedness of the imaginary powers of $\cL$ and in Section \ref{s: RT exponential case} that of the Riesz transform $\nabla\cL^{-1/2}$.  In the last section we briefly indicate how the arguments of the previous sections may be adapted to doubling manifolds that satisfy Gaussian upper estimates.
\par
We shall use the ``variable constant convention'', and denote by $C,$
possibly with sub- or superscripts, a constant that may vary from place to 
place and may depend on any factor quantified (implicitly or explicitly) 
before its occurrence, but not on factors quantified afterwards.

\section{Background on Hardy-type spaces}
\label{s: Background material}

Let $M$ denote a connected, complete $n$-dimensional Riemannian manifold
of infinite volume with Riemannian measure $\mu$.  
Denote by $\Ric$ the Ricci tensor, by $-\cL$ the Laplace--Beltrami operator
on $M$, 
by $b$ the bottom of the $\ld{M}$ spectrum of $\cL$,
and set $\be =
\limsup_{r\to\infty} \bigl[\log\mu\bigl(B(o,r)\bigr)\bigr]/(2r)$, 
where $o$ is any reference point of $M$.
By a result of Brooks $b\leq \be^2$ \cite{Br}.
\par
We denote by $\cB$ the family of all geodesic balls on $M$.
For each $B$ in $\cB$ we denote by $c_B$ and $r_B$
the centre and the radius of $B$ respectively.  
Furthermore, we denote by $c \, B$ the
ball with centre $c_B$ and radius $c \, r_B$.
For each \emph{scale parameter} $s$ in $\BR^+$, 
we denote by $\cB_s$ the family of all
balls $B$ in $\cB$ such that $r_B \leq s$.  
\par
We assume that the 
injectivity radius of $M$ is positive,
that the Ricci tensor is bounded from below
 and that $M$ has spectral gap,
to wit $b>0$. 
It is well known that for manifolds satisfying 
the assumptions above
there are positive constants $\al$, $\be$ and $C$ such that
\begin{equation} \label{f: volume growth} 
\mu(B)
\leq C \, r_B^{\al} \, \e^{2\be \, r_B}
\quant B \in\cB,\ \ \textrm{such \ that\ \ } r_B\ge 1.
\end{equation}
Moreover, the measure $\mu$ is \emph{locally doubling}, 
i.e. for every $s>0$ there exists a constant $D_s$ such that 
$$
\mu(2B)\le D_s\ \mu(B) \qquad
\forall B\in \cB_s.
$$
Furthermore (see \cite[Remark~2.3]{MMV2}) there exists a positive 
constant $C$ such that
\begin{equation} \label{f: lower bound balls}
C^{-1}\,r_B^n
\leq \mu(B)
\leq C\,r_B^n
\quant B\in\cB_1.
\end{equation}

In this section we gather some known facts about 
the Hardy-type space $\Xu{M}$, introduced
in \cite{MMV1} and studied in \cite{MMV2, MMV3}.  
For each open ball $B$, we denote by \begin{itemize}
\item[\rmi] $h^2(B)$ the space of all $\cL$-harmonic functions  in $L^2(B)$;
\item[\rmii] $q^2(B)$ the space all  functions $u\in L^2(B)$ such that   $\cL u$ is constant on $B$.
\end{itemize}
We say that a function $u$ lies in the space $h^2(\overline{B})$ (respectively $q^2(\overline{B})$) if $u$ is the restriction to $\overline{B}$ of a function in $h^2({B'})$ 
(respectively $q^2({B'})$) for some open ball $B'$ containing $B$.

\par
We shall refer to $h^2(B)$ as the \emph{harmonic Bergman space
} on $B$, while functions in $q^2(\OV B)$ are referred to as \emph{quasi-harmonic functions on
$\OV B$}. 
 Often we think of $q^2(\OV B)$
as a subspace of $\ld{B}$.  When we do, the symbol $q^2(\OV B)^\perp$
will denote the orthogonal complement of $q^2(\OV B)$ in $\ld{B}$.   Clearly $q^2(B)^\perp$ 
is a subspace of $q^2(\OV{B})^\perp$ and  of $h^2(B)^\perp$.
\begin{definition} \label{def: atoms}
An $X^1$-\emph{atom} associated to
the geodesic ball $B$ is a function $A$ in $\ld{M}$, supported in $B$, such that
\begin{enumerate}
\item[\itemno1]
$\int A\, v\wrt\mu=0$ for all $v\in q^2(\OV B)$;
\item[\itemno2]
$\ds\norm{A}{2}\leq \mu(B)^{-1/2}$.
\end{enumerate}
Note that condition \rmi\ implies that 
$\int_M A\wrt\mu=0$, because $\One_{2B}$ is in $q^2(\overline{B})$.
Given a positive ``scale parameter'' $s$, we say that 
an $X^k$-atom is {\emph{ at scale} $s$} if it is supported 
in a ball $B$ of $\cB_s$. 
\end{definition}

\begin{definition}
Choose a ``scale parameter'' $s>0$.  The \emph{Hardy-type space $\Xu{M}$}
is the space of all functions $F$ that admit a 
decomposition of the form $F= \sum_j c_j\, A_j$, where $\{c_j\}$ is 
a sequence in $\ell^1$ and $\{A_j\}$ is a sequence 
of 
$X^1$-atoms at scale $s$.  
We endow $\Xu{M}$ with the natural ``atomic norm''
$$
\bignorm{F}{X^1}
:= \Bigl\{ \sum_{j=1}^\infty \bigmod{c_j}:  F= \sum_{j=1}^\infty c_j  A_j,
\hbox{$A_j$  $X^1$-atoms at scale $s$} \Bigr\}. 
$$
\end{definition}

\begin{remark} \label{rem: equivalence Q OVQ}
It is known \cite{MMV1,MMV2} that all these atomic norms are equivalent and
it becomes a matter of convenience to choose one or another.  In our situation
any value $< \Inj(M)$ of the scale parameter $s$ would be 
a convenient choice for the following reasons.  Balls of radius $<\Inj(M)$ have no holes and their boundaries are smooth, 
so that various results concerning Sobolev spaces 
on balls hold.  We shall, implicitly or explicitly, make use
of them in the sequel. 
Another advantage of choosing $s< \Inj(M)$ is that we may make use of the 
fact that the cancellation condition \rmi\ in Definition~\ref{def: atoms} may then be 
equivalently formulated by requiring that 
$A$ be in $q^2(B)^\perp$ \cite[Proposition~3.5 and the comments after Theorem~4.12]{MMV3}.  
This will be used in the sequel without any further comment.
In the following, we shall choose $s_0=\smallfrac{1}{2}\Inj(M)$ and we shall call atoms at scale $s_0$ \emph{admissible}.
\end{remark}
For more on $\Xu{M}$, and on its close generalisations $\Xh{M}$, $k=2,3,\ldots$,
see \cite{MMV1,MMV2, MMV3}.
In particular, it is known that
the spaces $\Xh{M}$ have interesting equivalent characterisations
\cite{MMV2}, that, however, we shall not use in this paper.

\section{Atoms and the Laplace--Beltrami operator}
\label{s: Atoms and the LB}    
Henceforth we denote by $\cL$ the unique self-adjoint extension of minus the Laplace-Beltrami operator on $L^2(M)$. We recall that the domain of $\cL$ is the space of all functions in $L^2(M)$ such that the distribution $\cL u\in L^2(M)$. For a geodesic ball $B$ we denote by $\cL_B$ the restriction of $\cL$ to the subspace
$$
\Dom(\cL_B)=\set{f\in \Dom(\cL): \supp(f)\subset \overline{B}}.
$$
Even though the operator $\cL_B$ is defined on $L^2(M)$, in the following we shall often consider $\cL_B$ as an operator acting on $L^2(B)$.
In addition to $\cL_B$,
we consider also the Dirichlet Laplacian~$\cL_{B,\mathrm{Dir}}$ on the ball $B$, i.e. the Friedrichs extension of the restriction of $\cL$ to $C^\infty_c(B)$. We recall that the domain of $\cL_{B,\mathrm{Dir}}$ is
$$
\Dom(\cL_{B,\mathrm{Dir}})=\set{u\in W^{1,2}_0(B): \cL u\in L^2(B)},
$$
where $\cL u$ is interpreted in the sense of distributions on $B$ and $W^{1,2}_0(B)$ denotes the closure of $C^\infty_c(B)$ in the Sobolev space
$$
W^{1,2}(B)=\set{u\in L^2(B): \mod{\nabla u}\in L^2(B)}
$$
We shall restrict our attention to balls $B$,
which are the interior of their closure and $\partial B$ is smooth.  
Observe that any ball $B$ of radius $<\Inj(M)$ is the interior of its
closure and has smooth boundary. 
The following proposition will be useful later.

\begin{proposition} \label{p: L and LB}
Assume that $B$ is a ball in $M$ with smooth boundary. 
The following hold:
\begin{enumerate}
\item[\itemno1]
$\cL_{B,\mathrm{Dir}}$ is an extension of $\cL_B$;
\item[\itemno2]
$\Ran(\cL_B)=h^2(B)^\perp$ and $\cL_B$ is an isomorphism between its domain, endowed with the graph norm, and its range. 
\item[\itemno3]
$$
\bignorm{\cL^{-1}f}{2}
\leq \frac{1}{\la_1(B)} \, \bignorm{f}{\ld{B}} 
\quant f \in h^2(B)^\perp,
$$
where $\la_1(B)$ denotes the first eigenvalue of the Dirichlet Laplacian $\cL_{B,\mathrm{Dir}}$.  
\end{enumerate}
\end{proposition}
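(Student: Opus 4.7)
The three claims are linked: (i) supplies the regularity needed to compare $\cL_B$ with the self-adjoint operator $\cL_{B,\mathrm{Dir}}$, (ii) is the heart of the matter, and (iii) drops out of (ii) together with the spectral gap on $M$.

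For (i) I use the fact that on a complete Riemannian manifold one has the inclusion $\Dom(\cL)\subset \Dom(\cL^{1/2})=W^{1,2}(M)$. Thus if $f\in \Dom(\cL_B)$ then $f\in W^{1,2}(M)$ with $\supp f\subset \overline{B}$. Since $\partial B$ is smooth, a $W^{1,2}(M)$-function vanishing on $M\setminus\overline{B}$ has zero trace on $\partial B$, whence $f|_B\in W^{1,2}_0(B)$; moreover $\cL f\in L^2(M)$ is supported in $\overline{B}$ and coincides with the distributional Laplacian of $f|_B$ on $B$. Therefore $f\in \Dom(\cL_{B,\mathrm{Dir}})$ and $\cL_{B,\mathrm{Dir}}f=\cL_B f$.

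For (ii) my plan has four ingredients. \emph{Closedness}: $\cL_B$ is closed on $L^2(B)$, since $\cL$ is closed on $L^2(M)$ and the condition $\supp f\subset\overline{B}$ is preserved by $L^2$-limits. \emph{Adjoint computation}: testing $\langle \cL_B f,u\rangle=0$ against arbitrary $f\in C^\infty_c(B)\subset \Dom(\cL_B)$ shows that $u\in L^2(B)$ annihilates $\Ran(\cL_B)$ iff $\cL u=0$ in $\cD'(B)$, i.e.\ iff $u\in h^2(B)$; consequently $\overline{\Ran(\cL_B)}=h^2(B)^\perp$. \emph{Coercive estimate}: by (i) and the variational lower bound $\cL_{B,\mathrm{Dir}}\geq \la_1(B)\,I$ on $W^{1,2}_0(B)$, I obtain $\norm{\cL_B f}{2}\geq \la_1(B)\norm{f}{2}$ for every $f\in\Dom(\cL_B)$. \emph{Closed range}: a closed operator satisfying a coercive estimate has closed range, so $\Ran(\cL_B)$ is closed and hence equals $h^2(B)^\perp$. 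The same coercive estimate makes the graph norm on $\Dom(\cL_B)$ equivalent to $f\mapsto \norm{\cL_B f}{2}$, so $\cL_B$ is a topological isomorphism onto its range.

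Claim (iii) then follows quickly: for $f\in h^2(B)^\perp$, (ii) produces a unique $g\in \Dom(\cL_B)\subset \Dom(\cL)$ with $\cL_B g=f$ and $\norm{g}{2}\leq \la_1(B)^{-1}\norm{f}{L^2(B)}$; since $\cL g=f$ in $L^2(M)$ and $\cL$ is invertible on $L^2(M)$ by the spectral gap assumption $b>0$, necessarily $g=\cL^{-1}f$. The main (in fact essentially the only) technical point is the trace argument underlying (i), namely the verification that the zero extension of $f|_B$ lies in $W^{1,2}(M)$ with vanishing trace on $\partial B$; this is where the smoothness of $\partial B$ (which is why one works at scale $r_B<\Inj(M)$) and the identification $\Dom(\cL)\subset W^{1,2}(M)$ on complete Riemannian manifolds enter. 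Everything else is functional-analytic bookkeeping around the spectral gap and the variational characterization of $\la_1(B)$.
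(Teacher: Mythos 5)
Your arguments for (i) and (iii) are sound and essentially parallel the paper's, with one cosmetic difference: for (i) you invoke $\Dom(\cL)\subset\Dom(\cL^{1/2})=W^{1,2}(M)$, whereas the paper uses local elliptic regularity to conclude $u,\,\mod{\nabla u}\in L^2_{\mathrm{loc}}(M)$; both deliver $u\in W^{1,2}(B)$ with zero trace, so $u\in W^{1,2}_0(B)$.  Your use of the coercive estimate $\norm{\cL_B f}{2}\geq\la_1(B)\norm{f}{2}$ (inherited from $\cL_{B,\mathrm{Dir}}\geq\la_1(B)I$) to get closed range and the isomorphism is a clean alternative to the paper's invocation of the Open Mapping Theorem, and it meshes naturally with (iii).

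The genuine gap is in the ``adjoint computation'' in (ii).  You assert that testing $\langle\cL_B f,u\rangle=0$ against $f\in C^\infty_c(B)$ shows $u$ annihilates $\Ran(\cL_B)$ \emph{iff} $\cL u=0$ in $\cD'(B)$.  Only the ``only if'' direction follows from testing against $C^\infty_c(B)$: this gives $\Ran(\cL_B)^\perp\subset h^2(B)$.  The converse, namely that every $u\in h^2(B)$ satisfies $\int_B\cL_B f\cdot\OV{u}\wrt\mu=0$ for \emph{every} $f\in\Dom(\cL_B)$, is not automatic.  Elements of $\Dom(\cL_B)$ are not compactly supported in $B$ --- they merely have support in $\OV{B}$ --- and a generic harmonic $u\in L^2(B)$ has no boundary trace, so there is no way to integrate by parts to move $\cL$ from $f$ to $u$ without producing uncontrolled boundary contributions.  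Without this converse inclusion your argument only yields $h^2(B)^\perp\subset\Ran(\cL_B)$, not equality.  The paper closes exactly this gap by invoking the density $h^2(B)=\OV{h^2(\OV B)}$ from \cite[Prop.~3.5]{MMV3}: for $g$ harmonic in a neighbourhood of $\OV B$ one can extend $g$ to $\hat g\in\Dom(\cL)$ on $M$ with $\cL\hat g$ vanishing near $\OV B$, and then $\int_M\cL f\,\OV{\hat g}\wrt\mu=\int_M f\,\OV{\cL\hat g}\wrt\mu=0$ by self-adjointness of $\cL$, with no boundary issues at all; the density then upgrades orthogonality to all of $h^2(B)$.  This step, or some substitute for it, is missing from your proposal.
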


\begin{proof}
If $u\in \Dom(\cL_B)$ then $\cL u\in L^2(M)$ and $\supp(u)\subset \overline{B}$. Hence, by elliptic regularity, $u, \mod{\nabla u}\in L_{\mathrm{loc}}^2(M)$. Thus $u\in W^{1,2}(B)$. Since $u=0$ on the complement of $\overline{B}$ and the boundary of $B$ is smooth, the trace of $u$ on the boundary of $B$ is zero. Hence $u\in W^{1,2}_0(B)$ by a classical result.
This proves that $\Dom(\cL_B)\subset \Dom(\cL_{B,\mathrm{Dir}})$. Thus $\cL_B\subset \cL_{B,\mathrm{Dir}}$ because both operators are defined in the sense of distributions on their domains.

Next we prove \rmii.  First we observe that, since functions in 
$\Ran(\cL_B)$ are supported in $\overline{B}$, we may identify isometrically $\Ran(\cL_B)$ with the subspace of $L^2(B)$ obtained by restricting functions to $B$. Thus $\Ran(\cL_B)$ is closed in $L^2(B)$, since it is closed  in $L^2(M)$, because $\cL$ is strictly positive and closed. 
Thus, to prove the inclusion $h^2(B)^\perp\subseteq \Ran(\cL_B)$, it suffices to show that $\Ran(\cL_B)^\perp\subseteq h^2(B)$. Now, if $g\in L^2(B)$ is orthogonal to $\Ran(\cL_B)$, then 
$$
0=\int_B \cL \psi\,\overline{g}\wrt\mu=\langle\psi,{\cL g}\rangle \quant \psi \in C_c^\infty(B),
$$
where $\cL g$ is in the sense of distributions on $B$.  Therefore $\cL g = 0$ in $B$,
i.e., $g$ is harmonic in $B$ and belongs to $\ld{B}$, i.e., $g\in h^2(B)$.
\par
To prove the opposite inclusion, we observe that by \cite[Prop. 3.5]{MMV3} 
$$
h^2(B) = \OV{h^2(\OV B)}.
$$ 
Thus, to prove the inclusion $\Ran(\cL_B)\subseteq h^2(B)^\perp$  it suffices to show that $\Ran(\cL_B)$ is orthogonal
to $h^2(\OV B)$, i.e. that 
$\int_B\cL_B f\, \overline{g}\wrt\mu = 0$ for all $f$  in 
$\Dom(\cL_B)$ and for all $g$ in $h^2(\overline{B})$. 
Pick $f\in \Dom(\cL_B)$, $g\in h^2(\overline{B})$ and
denote by $\hat{g}$ an extension of $g$ to all of $M$, which is in $
\Dom(\cL)$.  Since $\cL_B f=\cL f$ and $\supp(\cL f)
\subset \overline{B}$,
\begin{align*}
\int_B \cL_B f\,\overline{g}\wrt\mu&=\int_M \cL f\, \overline{\hat{g}}\wrt\mu 
=\int_M f\, \overline{\cL \hat{g}}\wrt\mu=0,
\end{align*}
because $\supp(f)\subseteq \overline{B}$ and $\cL \hat{g}$ vanishes in a neighbourhood of $\overline{B}$. 
This concludes the proof that $\Ran(\cL_B)=h^2(B)^\perp$. \par
Next, we observe that the operator $\cL_B$ is injective and continuous from its domain, endowed with the graph norm,  and its range, since it is the restriction of $\cL$ which is injective and closed. Thus the fact that $\cL_B$ is an isomorphism between its domain and its range follows from the Open Mapping Theorem, since  the range $h^2(B)^\perp$ is closed.

Finally, to prove \rmiii, we observe that by \rmii \ if $f\in h^2(B)^\perp$ then there exists $u\in \Dom(\cL_B)$ such that $f=\cL_B u=\cL u$. Thus $\cL^{-1}f=u=\cL_B^{-1}f=\cL_{B,\mathrm{Dir}}^{-1}f$, since $\cL_{B,\mathrm{Dir}}^{-1}$ is an extension of $\cL_B^{-1}$, by \rmi. Hence
$$
\norm{\cL^{-1}u}{2}=\norm{\cL_{B,\mathrm{Dir}}^{-1}f}{2}\le \frac{1}{\lambda_1(B)}\norm{f}{2},
$$
as required.
\end{proof}

\begin{remark} \label{rem: 12h atomi}
Note that if $A$ is an $X^1$-atom supported in $B$, then 
the function $\cL^{-1}A$ has support contained in $\OV{B}$ \cite[Remark~3.5]{MMV2}.
\end{remark}

\noindent
A straightforward consequence of Proposition~\ref{p: L and LB} is the following.

\begin{corollary} \label{c: action on atoms}
If $A$ is an $X^{1}$-atom with support contained in  $\overline{B}$ and $r_B<\Inj(M)$ then the support of $\cL^{-1} A$ is contained in $\overline{B}$ and
\begin{equation} \label{f: action on atoms}
 \bignorm{\cL^{-1}A}{2}
\leq \frac{1}{\la_1(B) \, \mu(B)^{1/2}}.
\end{equation}
\end{corollary}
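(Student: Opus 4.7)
The plan is to extract the corollary directly from Proposition~\ref{p: L and LB}. The first task is to verify that the atom $A$ lies in the range of $\cL_B$, where $B$ is its containing ball. Since $r_B<\Inj(M)$, Remark~\ref{rem: equivalence Q OVQ} lets us reformulate the cancellation condition (i) in Definition~\ref{def: atoms} as $A\in q^2(B)^{\perp}$; as noted right before Definition~\ref{def: atoms}, $q^2(B)^{\perp}\subset h^2(B)^{\perp}$. By Proposition~\ref{p: L and LB}\,\rmii, $\Ran(\cL_B)=h^2(B)^{\perp}$, so there exists a unique $u\in \Dom(\cL_B)$ with $\cL_B u = A$.

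Next, I would identify $u$ with $\cL^{-1}A$. Indeed, $\cL_B$ is the restriction of $\cL$ to functions supported in $\overline{B}$, so $\cL u = \cL_B u = A$ with $u\in \Dom(\cL)$; applying $\cL^{-1}$ (which is bounded on $L^2(M)$ since $\cL$ has spectral gap) gives $u=\cL^{-1}A$. The support statement $\supp(\cL^{-1}A)\subset\overline{B}$ is then automatic from $u\in\Dom(\cL_B)$.

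For the norm bound, I would simply chain Proposition~\ref{p: L and LB}\,\rmiii\ with the size condition (ii) of Definition~\ref{def: atoms}:
$$
\bignorm{\cL^{-1}A}{2}\leq \frac{1}{\la_1(B)}\,\bignorm{A}{2}\leq \frac{1}{\la_1(B)\,\mu(B)^{1/2}}.
$$

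There is essentially no real obstacle here; the corollary is a bookkeeping consequence of the proposition once one notices the inclusion $q^2(B)^{\perp}\subset h^2(B)^{\perp}$ that makes the atomic cancellation sufficient to put $A$ in $\Ran(\cL_B)$. The only point requiring a moment's care is the passage from $\cL_B u = A$ to the genuine identification $u=\cL^{-1}A$, which is legitimate because $\cL$ is injective on $L^2(M)$ (thanks to the spectral gap assumption $b>0$ from Section~\ref{s: Background material}).
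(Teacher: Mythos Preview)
Your proposal is correct and follows essentially the same route as the paper: both arguments derive the support claim and the norm bound directly from Proposition~\ref{p: L and LB} (parts~\rmii\ and~\rmiii) together with the atom's size condition. The paper is terser---it simply cites the proof of Proposition~\ref{p: L and LB} (and Remark~\ref{rem: 12h atomi}) for the support statement and then invokes part~\rmiii---whereas you spell out explicitly why $A\in h^2(B)^\perp$ via the inclusion $q^2(B)^\perp\subset h^2(B)^\perp$ and why $\cL^{-1}A$ coincides with the element of $\Dom(\cL_B)$ furnished by part~\rmii; this extra detail is harmless and arguably clearer.
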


\begin{proof}
The proof of Proposition~\ref{p: L and LB} (or Remark~\ref{rem: 12h atomi} above)
shows that the support of $\cL^{-1}A$
is contained in $\OV B$.  The estimate (\ref{f: action on atoms}) is
a direct consequence of the size estimate in the definition of an atom 
and of the norm estimate for $\cL^{-1}$ in Proposition~\ref{p: L and LB}~\rmiii.
\end{proof}

\noindent
This result sheds light on the definition of $(1,2,M)$-atom
in \cite{HLMMY}.  In fact, a direct consequence of (\ref{f: action on atoms})
is that if $A$ is an $X^1$-atom
and $\la_1(B) \asymp r_B^{-2}$, then $A$ is an $(1,2,M)$-atom for every
positive integer $M$.  A similar observations applies to $X^k$-atoms
for $k\geq 2$.  
This suggests that the normalisation of $(1,2,M)$-atoms
introduced in \cite{HLMMY} may be profitably modified on manifolds
whenever the geometry of $M$ determines a somewhat different behaviour
of $\la_1(B)$.

\section{Boundedness of imaginary powers}
\label{s: IP}

In this section we analyse the boundedness of $\cL^{iu}$ 
from $X^1(M)$ to $\lu{M}$ in the case where $M$ satisfies our standing assumptions.
In this case the (minimal) heat kernel $h_t$ of $M$ satisfies the following 
pointwise estimate:
\begin{equation} \label{f: heat upper estimate}
h_t(x,y) 
\leq \frac{C}{\min(1,t^{n/2})} \,\,  \e^{-{b} t - d(x,y)^2/(2Dt)}  
\quant x,y \in M \quant t>0.
\end{equation}
See, for instance, \cite{Gr1}.  In particular 
under our standing assumptions, $M$ possesses the following Faber--Krahn inequality
\begin{equation} \label{f: FK}
\la_1(\Om) \geq a \, \mu(\Om)^{-2/n},
\end{equation}
where $a$ is a positive constant and $\Om$ is any precompact region in $M$.

We recall the following special case of \textit{Takeda's inequality}, 
which holds on all connected, complete, noncompact Riemannian manifolds
(see, for instance, \cite[Theorem~12.9]{Gr2}).
Suppose that $B$ is a ball in $M$.  Then 
\begin{equation}
\int_B \bigl(\cH_t \One_{(2B)^c}\bigr)^2 \wrt \mu
\leq \e\, \mu\bigl((2B)\setminus B\bigr) \, 
\bignorm{\cH_t \One_{(2B)^c}}{\infty}^{\!\!\!\!\! 2} \, \, \,
\max\,\Bigl(\frac{r_B^2}{2t}, \frac{2t}{r_B^2}\Bigr) \, \e^{-r_B^2/(2t)}
\end{equation}
for all $t>0$.  Observe that $\cH_t$ is submarkovian, so that 
$$
\bignorm{\cH_t \One_{(2B)^c}}{\infty}
\leq 1 
\quant t > 0.
$$
Under our standing assumptions on $M$, for each $s>0$
there exist constants $C_1$ and~$C_2$ such that
$$
C_1 \mu(B) \leq \mu\bigl((2B)\setminus B\bigr) 
\leq C_2 \mu(B) 
\quant B \in \cB_s.
$$
Then, by Takeda's inequality and the estimate above, there exist positive 
constants~$c$ and $C$ such that 
\begin{equation} \label{f: conseq Takeda}
\frac{1}{\mu(B)} \, \int_B \bigl(\cH_t \One_{(2B)^c}\bigr)^2 \wrt \mu
\leq C\, \, \e^{-c r_B^2/t}
\quant t \in  (0, r_B^2]   \quant B \in \cB_s.  
\end{equation}

\begin{theorem} \label{t: IP}
Suppose that $M$ is a Riemannian manifold satisfying our standing assumptions.
Then for every $u$ in $\BR\setminus \{0\}$
the imaginary powers $\cL^{iu}$ are bounded from $X^1(M)$ to $\lu{M}$. 
\end{theorem}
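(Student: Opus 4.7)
The plan is to reduce the assertion to a uniform $L^1$ estimate on admissible atoms. Specifically, by the atomic definition of $\Xu{M}$ it suffices to produce a constant $C$, independent of $A$, with $\bignorm{\cL^{iu}A}{1} \leq C$ for every admissible $X^1$-atom $A$ supported in a ball $B$ with $r_B \leq s_0 = \tfrac{1}{2}\Inj(M)$. I would split the $L^1$ norm into a local contribution on $2B$ and a non-local contribution on $(2B)^c$. The local piece is immediate: by Cauchy--Schwarz, local doubling of $\mu$, the $L^2$-unitarity of $\cL^{iu}$ supplied by the spectral theorem, and the size condition $\|A\|_2 \leq \mu(B)^{-1/2}$, one obtains $\bignorm{\cL^{iu}A}{L^1(2B)} \leq \mu(2B)^{1/2}\bignorm{\cL^{iu}A}{2} \leq C$.

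The non-local piece is where the real work lies. I would exploit the smoothing provided by $\cL^{-1}$: set $g:=\cL^{-1}A$. By Corollary~\ref{c: action on atoms}, $g$ is supported in $\OV B$ and $\|g\|_2 \leq 1/(\la_1(B)\mu(B)^{1/2})$. Combining this with the Faber--Krahn inequality~\eqref{f: FK} and the bound~\eqref{f: lower bound balls} yields $\la_1(B) \geq cr_B^{-2}$, hence $\|g\|_2 \leq Cr_B^2\mu(B)^{-1/2}$ and $\|g\|_1 \leq \mu(B)^{1/2}\|g\|_2 \leq Cr_B^2$. Writing $\cL^{iu}A = \cL^{iu+1}g$ and performing two successive integrations by parts in the spectral Mellin formula $\la^{iu} = \Gamma(1-iu)^{-1}\int_0^\infty t^{-iu}\la e^{-t\la}\,dt$, I would derive the pointwise identity
$$
\cL^{iu}A(x) = c_u \int_0^\infty t^{-iu-2}\,e^{-t\cL}g(x)\,dt \qquad\text{for } x \in (2B)^c,
$$
where the boundary terms vanish because for $x\notin\OV B$ the heat-kernel bound~\eqref{f: heat upper estimate} forces $e^{-t\cL}g(x)$ to decay super-polynomially as $t\to 0^+$, while the spectral gap $b>0$ kills $e^{-t\cL}g(x)$ as $t\to\infty$.

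With this representation in hand I would split the time integral at $t=r_B^2$. For $0 < t \leq r_B^2$, symmetry of $h_t$ and Takeda's estimate~\eqref{f: conseq Takeda} give
$$
\int_{(2B)^c}\mod{e^{-t\cL}g}\,d\mu \leq \|g\|_2\, \bignorm{\cH_t\One_{(2B)^c}}{L^2(B)} \leq Cr_B^2\,\e^{-cr_B^2/(2t)};
$$
the substitution $s=r_B^2/t$ then shows the resulting contribution $\int_0^{r_B^2} t^{-2}\cdot r_B^2\,\e^{-cr_B^2/(2t)}\,dt$ is a universal constant. For $t\geq r_B^2$, sub-Markovianity of the heat semigroup gives $\bignorm{e^{-t\cL}g}{1} \leq \|g\|_1 \leq Cr_B^2$, and so $\int_{r_B^2}^\infty t^{-2}\cdot Cr_B^2\,dt = C$. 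Adding the two regimes completes the non-local estimate.

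The step I expect to be the main obstacle is the rigorous justification of the integration-by-parts identity for $\cL^{iu+1}g$ on $(2B)^c$: one must verify pointwise convergence of the integral representation, the vanishing of both boundary terms at $t=0^+$ and $t=\infty$, and the exchange of the spectral integral with the pointwise evaluation. All of this relies on the off-diagonal Gaussian decay of $h_t$ to defeat the singular weight $t^{-iu-2}$ at the origin and on the spectral gap to control the behaviour at infinity. Once that representation is secured, the interplay between Takeda's inequality, sub-Markovianity, and the smoothing bound of Corollary~\ref{c: action on atoms} reduces the rest to a clean, uniform calculation.
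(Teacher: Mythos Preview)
Your proposal is correct and follows essentially the same route as the paper: reduce to a uniform bound on admissible atoms, split into $2B$ and $(2B)^c$, handle the local piece by Schwarz and local doubling, and for the non-local piece write $\cL^{iu}A=\cL^{iu+1}(\cL^{-1}A)$, use the subordination formula with weight $t^{-iu-2}$, split the time integral at $r_B^2$, and invoke Takeda's inequality \eqref{f: conseq Takeda} for small $t$ and sub-Markovianity for large $t$, closing with the Faber--Krahn bound $\la_1(B)\gtrsim r_B^{-2}$. The only cosmetic difference is the order of operations: the paper applies Schwarz in $y$ first and then estimates the $L^2(B)$-averaged kernel integral $\int_{(2B)^c}|k_{\cL^{iu+1}}(x,y)|\,d\mu(x)$, whereas you apply Minkowski in $t$ first and estimate $\int_{(2B)^c}|e^{-t\cL}g|\,d\mu$ directly via the duality $\int_{(2B)^c}|\cH_t g|\le \|g\|_2\,\|\cH_t\One_{(2B)^c}\|_{L^2(B)}$; the ingredients and the arithmetic are the same.
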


\begin{proof}
In view of the theory developed in \cite{MMV3} it suffices to prove that
$$
\sup\, \bigl\{ \bignorm{\cL^{iu}A}{1}: \, \hbox{$A$ admissible $X^1$-atom} \bigr\}
< \infty,   
$$ 

Recall that admissible $X^1$-atoms are supported in balls of radius at most $s_0=\smallfrac{1}{2}\Inj(M)$.  
Suppose that $A$ is such an atom, with support contained in $B$.  
Observe that 
$$
\bignorm{\cL^{iu}A}{1} 
 = \bignorm{\One_{2B} \, \cL^{iu}A}{1} + \bignorm{\One_{(2B)^c} \, \cL^{iu}A}{1}.
$$
We estimate the two summands on the right hand side separately. 
To estimate the first, simply observe that, by Schwarz's inequality, 
the size condition for $A$, and the spectral theorem,   
$$
\begin{aligned}
\bignorm{\One_{2B} \, \cL^{iu}A}{1} 
& \leq \mu(2B)^{1/2} \, \bigopnorm{\cL^{iu}}{2} \, \bignorm{A}{2} \\
& \leq \Bigl(\frac{\mu(2B)}{\mu(B)}\Bigr)^{1/2}.  
\end{aligned}
$$
The right hand side is bounded independently of~$B$, because
$\mu$ is locally doubling.
  
To estimate the second summand, 
we denote by $k_{\cL^{iu+1}}(x,y)$ the kernel of the operator $\cL^{iu+1}$. Then, by 
Schwarz's inequality
and (\ref{f: action on atoms}), we obtain
$$
\begin{aligned}
\bignorm{\One_{(2B)^c} \, \cL^{iu}A}{1} 
&  \leq \bignorm{\cL^{-1}A}{2} \, \, \Bigl[\int_B \!\! {\wrt \mu(y)} 
    \Bigl(\int_{(2B)^c} \bigmod{k_{\cL^{iu+1}}(x,y)} \wrt \mu(x) \Bigr)^2 \Bigr]^{1/2} \\  
& \leq \frac{C}{\la_1(B)} \, \, \Bigl[\frac{1}{\mu(B)}\, \int_B \!\! {\wrt \mu(y)} 
    \Bigl(\int_{(2B)^c} \bigmod{k_{\cL^{iu+1}}(x,y)} \wrt \mu(x) \Bigr)^2 \Bigr]^{1/2}.
\end{aligned}
$$
It remains to show that 
\begin{equation} \label{f: claim I} 
\Bigl[\frac{1}{\mu(B)}\, \int_B \!\! {\wrt \mu(y)} 
   \Bigl(\int_{(2B)^c} \bigmod{k_{\cL^{iu+1}}(x,y)} \wrt \mu(x) \Bigr)^2 \Bigr]^{1/2} 
\leq  C \, {\la_1(B)}, 
\end{equation}  
where $C$ is independent of $B$ in $\cB_{s_0}$.  
Observe that off the diagonal the following formula for the kernel of $\cL^{iu}$ 
holds 
$$
k_{\cL^{iu+1}}(x,y)
= c_u \ioty t^{-iu-1} \, h_t(x,y) \, {\dtt t}.
$$
We write the integral on the right hand side as the sum 
of the integrals over $(0, r_B^2]$ and $(r_B^2,\infty)$.
Note that 
$$
\begin{aligned}
 \int_{(2B)^c}  \Bigmod{\int_{r_B^2}^\infty t^{-iu-1} \, h_t(x,y) \, {\dtt t}} 
       \wrt\mu(x) 
& \leq   {\int_{r_B^2}^\infty {\frac{\dest t}{t^2}} \, 
    \int_{(2B)^c} h_t(x,y) \wrt\mu(x) } \\
& \leq r_B^{-2},
\end{aligned}
$$
because the heat semigroup is contractive on $\ly{M}$.  Hence
\begin{equation} \label{f: int1infty}
\Bigl[\frac{1}{\mu(B)}\, \int_B \!\! {\wrt \mu(y)} 
   \Bigl(\int_{(2B)^c} \Bigmod{\int_{r_B^2}^\infty t^{-iu-1} \, h_t(x,y) \, {\dtt t}} 
\wrt \mu(x) \Bigr)^2 \Bigr]^{1/2} 
\leq  C \, {\la_1(B)},    
\end{equation}
for $r_B^{-2} \leq C\, \la_1(B)$ (just take $\Omega=B$ in formula \eqref{f: FK} above).

We now prove that there exists a constant $C$, independent of $B$, such that 
\begin{equation} \label{f: int01}  
\Bigl[\frac{1}{\mu(B)}\, \int_B \!\! {\wrt \mu(y)} 
   \Bigl(\int_{(2B)^c} 
\Bigmod{\int_0^{r_B^2} t^{-iu-1} \, h_t(x,y) \, {\dtt t}} 
\wrt \mu(x) \Bigr)^2 \Bigr]^{1/2}  
\leq C \, \la_1(B).   
\end{equation}
By the generalised Minkowski inequality, the left hand side in (\ref{f: int01}) is majorised by
$$
\int_0^{r_B^2} {\frac{\dest t}{t^2}}\, \, \Bigl[\frac{1}{\mu(B)}\, \int_B \!\! {\wrt \mu(y)} 
\, \, \Bigl(\int_{(2B)^c} \, h_t(x,y)  \wrt \mu(x) \Bigr)^2 \Bigr]^{1/2},
$$
which, by (\ref{f: conseq Takeda}), is in turn bounded above by
$$
\begin{aligned}
\int_0^{r_B^2} \e^{-cr_B^2/(2t)} {\frac{\dest t}{t^2}}
& =     \frac{1}{r_B^2} \int_0^{1} \e^{-c/(2v)} {\frac{\dest v}{v^2}} \\
& \leq  C\, r_B^{-2}.
\end{aligned}
$$
Finally, note that $r_B^{-2} \leq C\, \la_1(B)$,
and (\ref{f: int01}) is proved.  
Then (\ref{f: int1infty}) and (\ref{f: int01}) prove (\ref{f: claim I}), as
required to  conclude the proof of the theorem.
\end{proof}
\section{Boundedness of the Riesz transform}
\label{s: RT exponential case}

In this section we prove that the Riesz transform is bounded from $\Xu{M}$
to $\lu{M}$. As a preliminary
step, we prove the following:

\begin{lemma} \label{l: est exp}
For every $\eta$ in $(0,1)$ and every $s>0$ there exist positive constants~$c$ and $C$ such that 
for every $B$ in $\cB_s$
\begin{equation} \label{f: est exp}
\int_{(4B)^c} \e^{-d(x,y)^2/Dt} \wrt \mu(x)
\leq C \, \bigl(t^{n/2} \, \e^{-\eta r_B^2/Dt} + \e^{-c/t}\bigr)
\end{equation}
for every $t$ in $(0,r_B^2]$ and for every $y$ in $B$.
\end{lemma}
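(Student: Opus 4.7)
The plan is to split the integral at the scale $d(x,y)=1$, which distinguishes the Euclidean regime, where \eqref{f: lower bound balls} gives $\mu(B(y,r)) \leq C r^n$ for $r\leq 1$, from the exponential regime, where \eqref{f: volume growth} gives $\mu(B(y,r)) \leq C r^\alpha \e^{2\beta r}$ for $r\geq 1$. I begin with the observation that for $y\in B$ and $x\in (4B)^c$ the triangle inequality gives $d(x,y) > 3r_B$, whence
\[
\int_{(4B)^c} \e^{-d(x,y)^2/Dt} \wrt\mu(x) \leq I_1 + I_2,
\]
where $I_1$ denotes the integral over $\{3r_B < d(x,y) \leq 1\}$ (empty if $r_B \geq 1/3$) and $I_2$ the integral over $\{d(x,y) > 1\}$.

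For the nonlocal piece $I_2$ I decompose into dyadic annuli $A_k = \{2^k < d(x,y) \leq 2^{k+1}\}$, $k\geq 0$, and use \eqref{f: volume growth} to bound $\mu(A_k) \leq C(2^{k+1})^\alpha \e^{2\beta \cdot 2^{k+1}}$. The essential manipulation is a completion of the square in the exponent: writing $u=2^{k+1}$,
\[
2\beta u - \frac{u^2}{4Dt} \leq -\frac{\eta u^2}{4Dt} + \frac{4\beta^2 Dt}{1-\eta},
\]
and the additive correction is bounded since $t\leq s^2$. The surviving series $\sum_{k\geq 0}(2^{k+1})^\alpha\,\e^{-\eta 4^k/Dt}$ is dominated by its $k=0$ term, giving $I_2 \leq C \e^{-c/t}$ with $c=\eta/D$.

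For the local piece $I_1$, assume $r_B<1/3$. Decompose $\{3r_B<d(x,y)\leq 1\}$ into annuli $\widetilde A_k = \{3r_B 2^{k-1} < d(x,y) \leq 3r_B 2^k\}$, $k\geq 1$; since all the radii involved stay bounded, \eqref{f: lower bound balls} yields $\mu(\widetilde A_k) \leq C(3r_B 2^k)^n$. Setting $u_k = 9r_B^2 \cdot 4^{k-1}/(Dt)$, a change of variables gives
\[
\int_{\widetilde A_k} \e^{-d(x,y)^2/Dt} \wrt\mu(x) \leq C(Dt)^{n/2} u_k^{n/2} \e^{-u_k}.
\]
Using $v^{n/2}\e^{-v} \leq C\,\e^{-v/2}$ and the fact that the $u_k$ grow geometrically by a factor of $4$, the sum is dominated by its $k=1$ term $\e^{-9r_B^2/(2Dt)}$, which is majorised by $\e^{-\eta r_B^2/Dt}$ for any $\eta<9/2$. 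This yields $I_1 \leq C\,t^{n/2}\,\e^{-\eta r_B^2/Dt}$.

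The main delicacy is arranging the completion-of-the-square step of $I_2$ so that a Gaussian with the correct rate $\eta/(Dt)$ survives while the exponential volume factor $\e^{2\beta u}$ is absorbed; this works precisely because the hypothesis $t\leq r_B^2 \leq s^2$ keeps the residual additive constant under control. Combining the two estimates gives the claimed inequality.
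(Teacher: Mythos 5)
Your proof is correct and follows essentially the same strategy as the paper's: split $(4B)^c$ at a fixed scale $\sim 1$, treat the inner region with dyadic annuli and the local volume bound \eqref{f: lower bound balls} to produce the $t^{n/2}\,\e^{-\eta r_B^2/Dt}$ term, and treat the outer region with annuli, the exponential volume growth \eqref{f: volume growth} and a completion of the square to produce the $\e^{-c/t}$ term. The only cosmetic differences are that you center the annuli at $y$ rather than at $c_B$ (the paper inserts the extra step $d(x,y)\geq\tfrac12 d(x,c_B)$), and you use dyadic rather than linear annuli in the outer region; neither affects the estimate.
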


\begin{proof}
For simplicity we prove the lemma for $s=1$. The general case requires only minor modifications. Since $y\in B$ and $x\notin 4B$,
$$
\begin{aligned}
d(x,y)
& \geq d(x,c_B) - d(y,c_B) \\
& \geq d(x,c_B) - r_B \\
& \geq \frac{1}{2} \, d(x,c_B).
\end{aligned}
$$
Hence
$$
\int_{(4B)^c} \e^{-d(x,y)^2/Dt} \wrt \mu(x)
\leq \int_{(4B)^c} \e^{-d(x,c_B)^2/4Dt} \wrt \mu(x).
$$
Thus, it suffices to estimate the last integral. 
We split the set $(4B)^c$ into annuli.  If $r_B$ is in $(1/4,1]$, then 
we simply write 
$$
(4B)^c
= \bigcup_{k=1}^\infty A\bigl(4kr_B, 4(k+1)r_B\bigr),
$$
where $A(u,v)$ denotes the annulus $\{x \in M: u\leq d(x,c_B)\leq v\}$. 
If, instead, $r_B < 1/4$, then we write
$$
(4B)^c
= \Bigl[\bigcup_{j=0}^{J-1} A\bigl(2^j4r_B, 2^{j+1}4r_B\bigr)\Bigr]
\cup \Bigl[ \bigcup_{k=1}^\infty A\bigl(2^J4kr_B, 2^J4(k+1)r_B\bigr) \Bigr],
$$
where $J$ is chosen so that $R:= 2^{J}4r_B$ is in $(1/2,1]$, i.e.,
$$
\log_2(1/r_B) - 3 \leq J \leq \log_2(1/r_B) - 2.
$$
We give details in the case where $r_B < 1/4$.  
The case where $r_B$ is in $(1/4,1]$ is simpler and we omit the details. 
By (\ref{f: lower bound balls}),
$$  
\begin{aligned}
\int_{A(2^j4r_B, 2^{j+1}4r_B)} \e^{-d(x,c_B)^2/4Dt} \wrt \mu(x)
& \leq C\, (2^{j+1}4r_B)^n \,  \e^{-2^{2j+2}r_B^2/Dt} \\ 
& =    C' \,t^{n/2}\,  \Bigl(\frac{2^{2j+2}r_B^2}{Dt}\Bigr)^{n/2} 
        \,  \e^{-2^{2j+2}r_B^2/Dt} \\ 
& \leq C_\eta\,t^{n/2}\,  \e^{-\eta 2^{2j+2}r_B^2/Dt}.  
\end{aligned}
$$
We have used the fact that $t\leq r_B^2$ in the last inequality. 
By summing over $j$ between $0$ and $J-1$, we obtain that
\begin{equation} \label{f: terms first sum}
\begin{aligned}
\int_{(RB) \setminus (4B)} \e^{-d(x,c_B)^2/4Dt} \wrt \mu(x)
& \leq C_\eta\,t^{n/2}\,  \sum_{j=0}^\infty 
     \bigl[\e^{-4\eta r_B^2/Dt}\bigr]^{2^{2j}} \\ 
& \leq C_\eta\,t^{n/2}\,\e^{-4\eta r_B^2/Dt}.
\end{aligned}
\end{equation}
By (\ref{f: volume growth}) and the estimate 
$(Rk)^\al \,  \e^{2\be R(k+1)} \leq C_\vep \, \e^{(2\be+\vep) Rk}$, which holds
for every~$k$,
\begin{equation} \label{f: est big annuli}
\begin{aligned}
\int_{A(2^J4kr_B, 2^J4(k+1)r_B)} \e^{-d(x,c_B)^2/4Dt} \wrt \mu(x)
& \leq C\, (Rk)^\al \,  \e^{2\be R(k+1)-R^2k^2/4Dt} \\ 
& \leq C_\vep\,  \e^{(2\be+\vep) Rk-R^2k^2/4Dt}.
\end{aligned}
\end{equation}
By completing the square, and using the fact that $t\leq r_B^2$, we see that 
$$    
\begin{aligned}
(2\be+\vep) Rk-\frac{R^2k^2}{4Dt}
& =    \Bigl(\be+\frac{\vep}{2}\Bigr)^2 \, 4Dt - \Bigl[\frac{Rk}{2\sqrt{Dt}}
         - 2\Bigl(\be+\frac{\vep}{2}\Bigr)\, \sqrt{Dt}\Bigr]^2 \\ 
& \leq \Bigl(\be+\frac{\vep}{2}\Bigr)^2 \, 4Dr_B^2 - \Bigl[\frac{Rk}{2\sqrt{Dt}}
         - 2\Bigl(\be+\frac{\vep}{2}\Bigr)\, \sqrt{Dt}\Bigr]^2.  
\end{aligned}
$$
Now observe that if $Rk \geq 4D(2\be+\vep)\, r_B^2$, then 
$Rk - (2\be+\vep)2Dt \geq Rk/2$, so that 
\begin{equation} \label{f: est exponent}
(2\be+\vep) Rk-\frac{R^2k^2}{4Dt}
\leq C- \frac{R^2k^2}{16\, Dt},  
\end{equation}
where $C= \bigl(\be+\vep/2\bigr)^2 \, 4D$.  Choose 
$K:= [\!\![4D(2\be+\vep)\, r_B^2/R]\!\!]+1$.  Now,
$$
\int_{M \setminus (RB)} \e^{-d(x,c_B)^2/4Dt} \wrt \mu(x)
 =  \sum_{k=1}^\infty \int_{A\bigl(2^J4kr_B, 2^J4(k+1)r_B\bigr)} 
       \e^{-d(x,c_B)^2/4Dt} \wrt \mu(x).  
$$
Note that $K \leq D(\be+\vep/2)$, so it does not depend on $r_B$.
We estimate each of the terms of the series up to the $(K-1)^{\mathrm{th}}$ 
as in (\ref{f: est big annuli}), so that the sum for $k$ from $1$ to $K-1$
may be estimated by 
$$
C_\vep\, K\,  \e^{(2\be+\vep)D}\, \e^{-R^2/4Dt}
\leq C\, \e^{-1/8Dt}.
$$
The series for $k$ from $K$ to $\infty$ may be estimates as 
$$
C\, \sum_{k=K}^\infty \e^{-R^2k^2/(16\, Dt)}
\leq C \, \e^{-c/t}
$$
for some positive $c$. By combining the estimates above, we obtain that 
\begin{equation} \label{f: est second int}
\int_{M \setminus (RB)} \e^{-d(x,c_B)^2/4Dt} \wrt \mu(x)
\leq C\, \e^{-c/t},
\end{equation} 
which, together with (\ref{f: terms first sum}), gives the required estimate.  

The proof of the lemma is complete. 
\end{proof}

\begin{lemma}  \label{l: local-global}
Suppose that $M$ is a Riemannian manifold satisfying our standing assumptions.
Fix a scale parameter $s< \Inj(M)$.
Then there exists a constant $C$ such that for every ball $B$ in $\cB_s$
$$
\norm{\nabla\cL^{1/2}f}{\lu{(4B)^c}} 
\leq C\,  r_B^{-2}  \, \bignorm{f}{\lu{B}} 
\quant f \in \lu{B}.
$$
\end{lemma}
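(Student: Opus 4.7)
I shall represent $\cL^{1/2}$ via the subordination formula $\lambda^{1/2}=\frac{1}{2\sqrt\pi}\int_0^\infty(1-\e^{-t\lambda})\,t^{-3/2}\wrt t$.  By the spectral theorem, for $f$ smooth and compactly supported in $B$ and $x\in (4B)^c$ (so that $f(x)=0$), this gives the pointwise identity
$$
\nabla\cL^{1/2}f(x) \,=\, -\frac{1}{2\sqrt\pi}\int_0^\infty t^{-3/2}\int_B \nabla_x h_t(x,y)\,f(y)\wrt\mu(y)\,\wrt t.
$$
Taking the modulus, integrating on $(4B)^c$, and swapping the order of integration (Fubini and the generalised Minkowski inequality), the lemma reduces to the uniform bound
$$
\cI(y)\,:=\,\int_0^\infty t^{-3/2}\int_{(4B)^c}\abs{\nabla_x h_t(x,y)}\wrt\mu(x)\,\wrt t \,\le\, C\,r_B^{-2}\qquad(y\in B).
$$
General $f\in\lu{B}$ is handled by approximation with smooth compactly supported functions (or by taking the kernel representation as the definition of $\nabla\cL^{1/2}f$ on $(4B)^c$).

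\textbf{Bounding $\cI(y)$.} I split the $t$-integral at $t=r_B^2$.  For $t\le r_B^2$, I use the classical Gaussian gradient estimate for the heat kernel under bounded geometry,
$$
\abs{\nabla_x h_t(x,y)}\,\le\, C\,t^{-(n+1)/2}\e^{-d(x,y)^2/(Dt)},
$$
together with Lemma~\ref{l: est exp} (for some $\eta\in(0,1)$), to obtain
$$
\int_{(4B)^c}\abs{\nabla_x h_t(x,y)}\wrt\mu(x) \,\le\, C\,\bigl(t^{-1/2}\e^{-\eta r_B^2/(Dt)} + t^{-(n+1)/2}\e^{-c/t}\bigr).
$$
After multiplying by $t^{-3/2}$, the change of variables $u=r_B^2/(Dt)$ converts the leading term into $Cr_B^{-2}\int_1^\infty\e^{-\eta u}\wrt u\le Cr_B^{-2}$; the second summand contributes a uniformly bounded quantity, which is $\le Cr_B^{-2}$ since $r_B\le s<\Inj(M)$ is bounded.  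For $t>r_B^2$, I use the simple global bound
$$
\int_M\abs{\nabla_x h_t(x,y)}\wrt\mu(x) \,\le\, C\,\bigl(1\vee t^{-1/2}\bigr).
$$
For $t\le 1$ this follows from the Gaussian gradient estimate together with the volume growth~\eqref{f: volume growth}; for $t\ge 1$ it follows from the decomposition $\nabla_x h_t(x,y)=\int\nabla_x h_1(x,z)\,h_{t-1}(z,y)\wrt\mu(z)$, combined with $\sup_z\int\abs{\nabla_x h_1(x,z)}\wrt\mu(x)\le C$ and the stochastic completeness $\int h_{t-1}(z,y)\wrt\mu(z)\le 1$, which holds under the Ricci lower bound.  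Integrating $t^{-3/2}\cdot(1\vee t^{-1/2})$ over $(r_B^2,\infty)$ yields $\le Cr_B^{-2}$ when $r_B\le 1$, and $\le Cr_B^{-1}\le C's\,r_B^{-2}$ when $r_B\in(1,s)$.

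\textbf{Main obstacle.} The delicate step is the large-time bound.  Because $M$ has exponential volume growth, $L^1(M)$ control of $\nabla_x h_t(\cdot,y)$ is not automatic from pointwise Gaussian estimates; the Gaussian factor in $x$ must be integrated against volumes of exponential growth.  The argument above circumvents this by using the semigroup property to factor the gradient at a bounded time and then invoking stochastic completeness to control the remaining heat-kernel mass.  The small-time part is more routine and parallels the proof of Theorem~\ref{t: IP}, where \eqref{f: conseq Takeda} played a role analogous to Lemma~\ref{l: est exp} here.
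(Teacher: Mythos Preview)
Your argument is correct, and the overall architecture (kernel representation via subordination, splitting the $t$-integral at $r_B^2$, and invoking Lemma~\ref{l: est exp} for the small-time piece) matches the paper.  The two proofs diverge, however, in the analytic tools used for each piece.

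For $t\le r_B^2$ you appeal to the \emph{pointwise} Li--Yau type bound $\abs{\nabla_x h_t(x,y)}\le C\,t^{-(n+1)/2}\e^{-d(x,y)^2/(Dt)}$, which is indeed available for bounded $t$ under the standing assumptions (Ricci lower bound and positive injectivity radius).  The paper instead works only with Grigor'yan's weighted $L^2$ integral estimate $E_1(y,t)\le C\,t^{-n/2-1}$ and recovers the $L^1$ bound on $(4B)^c$ via Schwarz's inequality against Lemma~\ref{l: est exp}.

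For $t>r_B^2$ your semigroup factorisation $\nabla_x h_t=\nabla_x h_1 * h_{t-1}$, combined with $\sup_z\int_M\abs{\nabla_x h_1(x,z)}\wrt\mu(x)<\infty$ and stochastic completeness, is a clean shortcut to $\int_M\abs{\nabla_x h_t(\cdot,y)}\wrt\mu\le C$ for $t\ge 1$.  The paper takes a quite different route here: it localises via a partition of unity by balls of radius $r_B$, applies the parabolic Caccioppoli inequality to pass from $\abs{\nabla_x h_t}$ to $h_t$ on each small cylinder, then uses the (local) parabolic Harnack inequality to replace the $L^2$ average of $h_t$ by its $L^1$ average on a slightly larger cylinder, and finally sums using $\norm{h_t(\cdot,y)}{1}\le 1$.

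Your approach is shorter but leans on pointwise gradient bounds for $h_t$ at bounded times.  The paper's route avoids any pointwise gradient estimate, using only integrated $L^2$ bounds and general parabolic PDE tools; this is what allows the method to be transplanted to the doubling setting of Section~\ref{s: RT polynomial case}, where Harnack is replaced by the Li--Wang reverse H\"older inequality and pointwise gradient bounds are \emph{not} assumed.
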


\begin{proof}   
\textit{Step~I: reduction of the problem and conclusion}.
A straightforward argument shows that 
$$
\nabla\cL^{1/2} f (x)
= \int_M k_{\nabla\cL^{1/2}}(x,y) \, f(y) \wrt \mu(y)
\quant f \in C_c(M) \quant x \notin \supp (f),
$$
where
\begin{equation} \label{f: kernel Riesz mod}
k_{\nabla\cL^{1/2}}(x,y) = \frac{1}{\Ga(-1/2)} \, \ioty \nabla_xh_t(x,y) \,
     \frac{\wrt t}{t^{3/2}}
\end{equation}
for all $(x,y)$ off the diagonal in $M\times M$.  Here $h_t$ denotes
the heat kernel (with respect to the Riemannian measure $\mu$).  
Define $\cI^B (y)$ and $\cI_B (y)$ by
$$
\cI^B (y)
:= \int_0^{r_B^2} \frac{\wrt t}{t^{3/2}} \, \int_{(4B)^c} \bigmod{\nabla_x h_t(x,y)}
   \wrt \mu(x)
$$
and 
$$
\cI_B (y)
:= \int_{r_B^2}^\infty \frac{\wrt t}{t^{3/2}} 
    \, \int_{(4B)^c} \bigmod{\nabla_x h_t(x,y)} \wrt \mu(x).
$$
Note that, by (\ref{f: kernel Riesz mod}) and Tonelli's theorem,
\begin{equation} \label{f: ref est}
\begin{aligned}
\norm{\nabla\cL^{1/2}f}{\lu{(4B)^c}} 
& \leq \int_{(4B)^c}\ioty \int_B \bigmod{\nabla_x h_t(x,y)} \, \mod{f(y)} 
   \wrt\mu(x) \wrt\mu(y) \frac{\wrt t}{t^{3/2}} \\
& = \int_B \bigl[\cI^B (y)+\cI_B (y)\bigr]  \, \mod{f(y)} \wrt\mu(y).   
\end{aligned}
\end{equation}
We \textit{claim} that there exists a constant $C$ such that
\begin{equation} \label{f: IB}
\cI^B (y) \leq C\, r_B^{-2}
\qquad\hbox{and} \qquad
\cI_B (y) \leq C\, r_B^{-2}.
\end{equation}
These estimates, hence the claim, will be proved 
in Step~II and Step~III, respectively.  Assuming the claim,
we may deduce from \eqref{f: ref est} and \eqref{f: IB} that 
\begin{equation*} 
\begin{aligned}  
\norm{\nabla\cL^{1/2}f}{\lu{(4B)^c}} 
& \leq \int_B \bigl[\cI^B (y)+\cI_B (y)\bigr]  \, \mod{f(y)} \wrt\mu(y) \\   
& \leq C\,  r_B^{-2}  \, \bignorm{f}{\lu{B}}, 
\end{aligned}
\end{equation*}
as required to conclude the proof of the lemma.  

\textit{Step~II: estimate of $\cI^B (y)$}.  
We shall use Grigor'yan's integral estimates for the gradient of the heat kernel
\cite{Gr3}.  It will be convenient 
to introduce more notation.  We fix $D>4$, and set,
for every $y$ in $M$ and for every $t>0$,
\begin{equation} \label{f: E0}
E_0(y,t)
:= \int_M h_t(x,y)^2 \, \e^{d(x,y)^2/Dt} \wrt \mu(x)
\end{equation}
and
\begin{equation} \label{f: E1}
E_1(y,t)   
:= \int_M \bigmod{\nabla_xh_t(x,y)}^2 \, \e^{d(x,y)^2/Dt} \wrt \mu(x).
\end{equation}
Recall that, under our standing assumptions
on $M$, the Faber--Krahn type inequality \eqref{f: FK} holds on $M$.  
Furthermore, the constant $a$ in \eqref{f: FK} is uniformly bounded
from below as long as $r_B\leq s$ (because $M$ has bounded geometry).  Therefore
\cite[Theorem~15.8, p.~400]{Gr2}
$$      
E_0(y,t)
\leq C \, t^{-n/2} 
\quant t \in (0,r_B^2] \quant y \in M.
$$
Hence \cite[Theorem~1.1]{Gr3}  
$$    
E_1(y,t)
\leq C \, t^{-n/2-1} 
\quant t \in (0,r_B^2] \quant y \in M.
$$
By using Schwarz's inequality, the estimate above
and Lemma~\ref{l: est exp}, we obtain
\begin{equation} \label{f: est cIB Step III}
\begin{aligned}
\cI^B (y)
& \leq C \, \int_0^{r_B^2} \bigl(t^{n/2} \, \e^{-\eta r_B^2/Dt} 
     + \e^{-c/t}\bigr)^{1/2} \, E_1(y,t)^{1/2} \frac{\wrt t}{t^{3/2}}  \\
& \leq C \, \int_0^{r_B^2} t^{-1} \, \e^{-\eta r_B^2/2Dt} \,\frac{\wrt t}{t} 
     + C \, \int_0^{r_B^2} \e^{-c/2t} \, \frac{\wrt t}{t^{n/4+2}}  \\
& \leq C \, \bigl(r_B^{-2}+1\bigr) 
\quant y \in M,  
\end{aligned}
\end{equation} 
as required to prove the first statement in \eqref{f: IB}.  

\textit{Step~III: estimate of $\cI_B(y)$}.  
The main idea is to combine Caccioppoli's inequality with Harnack's
inequality for balls of small radius.  We denote by $\{\vp_j\}$ a 
smooth partition of unity associated to a locally finite covering 
$\{B_j'\}$ of $(4B)^c$ by balls of radius $r_B$.   
We set  
\begin{equation} \label{f: def cJBjk}
\cI_{B;j,k}(y)
:= \int_{(k-1)r_B^2}^{kr_B^2}\frac{\wrt t}{t^{3/2}} 
     \, \int_{B_j'} \bigmod{\nabla_x h_t(x,y)} \, \vp_j(x) \wrt \mu(x).
\end{equation}
Clearly 
\begin{equation} \label{f: first est for cIBb}
\begin{aligned}
\cI_B(y) 
& \leq \sum_j \int_{r_B^2}^\infty \frac{\wrt t}{t^{3/2}} 
           \, \int_{B_j'} \bigmod{\nabla_x h_t(x,y)} \wrt \mu(x) \\
& =    \sum_j \sum_{k=2}^\infty \cI_{B;j,k}(y).
\end{aligned}
\end{equation}   
We now introduce the parabolic cylinder $\cC_{j,k}$, defined as follows
$$
\cC_{j,k}
:= B_j'\times \bigl((k-1)r_B^2,kr_B^2\bigr].
$$
Clearly $\mu\times \la \bigl(\cC_{j,k}\bigr) = \mu(B_j') \, r_B^2$, where
$\la$ denotes the Lebesgue measure on the real line. 
Recall the following version of the parabolic Caccioppoli inequality
\begin{equation} \label{f: caccioppoli}   
\int_{\cC_{j,k}} \bigmod{\nabla_x h_t(x,y)}^2 \wrt \mu(x) \wrt t
\leq \frac{C}{r_B^2} \, 
\int_{2\cC_{j,k}} \bigmod{h_t(x,y)}^2 \wrt \mu(x) \wrt t,
\end{equation}
where
$$
2\cC_{j,k}
:= 2B_j'\times \bigl((k-2)r_B^2,(k+1) r_B^2\bigr].
$$
This inequality is a straightforward consequence of 
\cite[Lemma~15.2 and Lemma 15.3]{Gr2}.  
Observe that 
$$
\cI_{B;j,k}(y)
\asymp \frac{1}{(kr_B^2)^{3/2}} \, \int_{\cC_{j,k}} \bigmod{\nabla_x h_t(x,y)} 
   \wrt \mu(x) \wrt t.
$$
Therefore, by Schwarz's inequality and Caccioppoli's inequality
$$
\begin{aligned}
\cI_{B;j,k}(y)
& \leq \frac{\mu\times \la \bigl(\cC_{j,k}\bigr)}{(kr_B^2)^{3/2}} \,  
   \Bigl[ \frac{1}{\mu\times \la \bigl(\cC_{j,k}\bigr)} 
    \int_{\cC_{j,k}} \bigmod{\nabla_x h_t(x,y)}^2 \wrt \mu(x) \wrt t\Bigr]^{1/2} \\ 
& \leq \frac{\mu\times \la \bigl(\cC_{j,k}\bigr)}{(kr_B^2)^{3/2}} \,  \frac{1}{r_B}\,
   \Bigl[ \frac{1}{\mu\times \la \bigl(2\cC_{j,k}\bigr)} 
    \int_{2\cC_{j,k}} h_t(x,y)^2 \wrt \mu(x) \wrt t\Bigr]^{1/2}.
\end{aligned}
$$
We now use the parabolic Harnack inequality applied to the parabolic
cylinder $2\cC_{j,k}$ and conclude that 
\begin{align}  \label{f: upH}
\Bigl[ \frac{1}{\mu\times \la \bigl(2\cC_{j,k}\bigr)} 
   \int_{2\cC_{j,k}}  h_t(x,y)^2 & \wrt \mu(x) \wrt t\Bigr]^{1/2} 
  \nonumber \\
& \leq C \, \inf_{(z,t) \in 2\cC_{j,k+2}} h_t(z,y) 
\nonumber\\
& \leq C \,  \frac{1}{\mu\times \la \bigl(2\cC_{j,k}\bigr)} 
  \int_{2\cC_{j,k+2}} h_t(x,y) \wrt \mu(x) \wrt t.  
\end{align}
By combining the last two estimates, we obtain that
\begin{equation}\label{f: est1 cIB Step IV}
\begin{aligned}
\cI_{B;j,k}(y)
& \leq \frac{C}{(kr_B^2)^{3/2}} \, \frac{1}{r_B} \, 
     \int_{2\cC_{j,k+2}} h_t(x,y) \wrt \mu(x) \wrt t \\
& \leq \frac{C}{r_B} \, 
     \int_{kr_B^2}^{(k+3)r_B^2} \frac{\wrt t}{t^{3/2}} \, 
      \int_{2B_j'} h_t(x,y) \wrt \mu(x).
\end{aligned}
\end{equation}
We now sum over $j$ and $k$, and then use the facts that the covering $\{B_j'\}$
is uniformly locally finite and that
$\norm{h_t(\cdot,y)}{1}\leq 1$ for every $y$ in $M$, and obtain 
\begin{equation} \label{f: est cIB Step IV}
\begin{aligned}
\cI_B (y)
& \leq \frac{C}{r_B} \int_{r_B^2}^\infty \frac{\wrt t}{t^{3/2}} \, 
      \int_{(2B)^c} h_t(x,y) \wrt \mu(x) \\
& \leq \frac{C}{r_B} \int_{r_B^2}^\infty \frac{\wrt t}{t^{3/2}}  \\
& \leq \frac{C}{r_B^2},   
\end{aligned}
\end{equation} 
as required to prove the second estimate in \eqref{f: IB},
and to conclude the proof of the claim.  
\end{proof}

\begin{theorem}
Suppose that $M$ is a Riemannian manifold satisfying our standing assumptions.
The Riesz transform $\nabla\cL^{-1/2}$ is bounded from $\Xu{M}$ to $\lu{M}$.
\end{theorem}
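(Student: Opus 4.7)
The plan is to invoke the atomic theory of~\cite{MMV3}, which reduces the $X^1\to L^1$ boundedness of $\nabla\cL^{-1/2}$ to the uniform bound
$$
\sup\bigl\{\bignorm{\nabla\cL^{-1/2}A}{1}:A\text{ admissible }X^1\text{-atom}\bigr\}<\infty.
$$
Fixing an admissible atom $A$ with support in $B\in\cB_{s_0}$, I would split
$$
\bignorm{\nabla\cL^{-1/2}A}{1}=\bignorm{\nabla\cL^{-1/2}A}{\lu{4B}}+\bignorm{\nabla\cL^{-1/2}A}{\lu{(4B)^c}}
$$
and treat the two pieces by entirely different arguments, in strict analogy with the proof of Theorem~\ref{t: IP}.

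For the local piece, I would observe that $\nabla\cL^{-1/2}$ is an $L^2$-contraction, since by the spectral theorem and the spectral gap
$$
\bignorm{\nabla\cL^{-1/2}f}{2}^2=\prodo{\cL\cL^{-1/2}f}{\cL^{-1/2}f}=\bignorm{f}{2}^2.
$$
Then Schwarz's inequality, the $L^2$-size condition on $A$, and local doubling yield
$$
\bignorm{\nabla\cL^{-1/2}A}{\lu{4B}}\leq\mu(4B)^{1/2}\,\bignorm{A}{2}\leq\bigl(\mu(4B)/\mu(B)\bigr)^{1/2}\leq C.
$$

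The far-away piece is where the content of Lemma~\ref{l: local-global} enters decisively. The key move is the identity $\nabla\cL^{-1/2}A=\nabla\cL^{1/2}(\cL^{-1}A)$, valid thanks to the spectral gap, which substitutes the rough atom~$A$ by the smoother function $f:=\cL^{-1}A$; by Corollary~\ref{c: action on atoms}, $f$ is still supported in~$\OV B$, so Lemma~\ref{l: local-global} applies directly and produces
$$
\bignorm{\nabla\cL^{-1/2}A}{\lu{(4B)^c}}=\bignorm{\nabla\cL^{1/2}f}{\lu{(4B)^c}}\leq C\,r_B^{-2}\,\bignorm{f}{\lu{B}}.
$$
Bounding $\bignorm{f}{\lu{B}}\leq\mu(B)^{1/2}\,\bignorm{\cL^{-1}A}{2}\leq 1/\la_1(B)$ via Schwarz and Corollary~\ref{c: action on atoms}, and invoking the Faber--Krahn estimate $\la_1(B)\geq c\,r_B^{-2}$ (valid for $B\in\cB_{s_0}$, exactly as used in the proof of Theorem~\ref{t: IP}), I obtain $r_B^{-2}/\la_1(B)\leq C$ uniformly in~$B$, so the far-away contribution is uniformly bounded as well, completing the plan.

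The main technical obstacle has already been overcome in Lemma~\ref{l: local-global}, where the weak $L^1\to L^1$ off-diagonal estimate for $\nabla\cL^{1/2}$ was produced by combining Grigor'yan-type integral gradient bounds on the heat kernel with parabolic Caccioppoli and Harnack inequalities. The decisive, essentially algebraic manoeuvre of the present theorem is the substitution $A=\cL(\cL^{-1}A)$, which trades the singular Riesz operator acting on a rough atom for the somewhat less singular $\nabla\cL^{1/2}$ acting on a well-localised $L^2$ function, thereby bringing the estimate into the range of Lemma~\ref{l: local-global}.
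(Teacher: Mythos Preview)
Your proposal is correct and follows essentially the same approach as the paper: the same reduction to uniform atom bounds via \cite{MMV3}, the same $4B/(4B)^c$ split, the same local estimate via Schwarz and $L^2$-boundedness of $\nabla\cL^{-1/2}$, and the same far-away argument combining the identity $\nabla\cL^{-1/2}A=\nabla\cL^{1/2}(\cL^{-1}A)$ with Lemma~\ref{l: local-global}, Corollary~\ref{c: action on atoms}, and the Faber--Krahn inequality~\eqref{f: FK}. There is no substantive difference.
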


\begin{proof}
In view of the theory developed in \cite{MMV3},  it
suffices to prove that 
\begin{equation} \label{f: aim}
\sup \bignorm{\nabla\cL^{-1/2} A}{1} 
< \infty,
\end{equation}
where the supremum is taken over all admissible $X^1$-atoms $A$,  i.e. over all atoms at scale {$s_0$}.  

Fix  such an atom \ $A$, and denote by $B$ the ball associated to $A$.  Recall
that $r_B \leq s_0$.  Observe that
$$
\bignorm{\nabla\cL^{-1/2} A}{1} 
= \bignorm{\nabla\cL^{-1/2} A}{\lu{4B}} + \bignorm{\nabla\cL^{-1/2} A}{\lu{(4B)^c}}.
$$
We shall estimate the two summands on the right hand side separately. 
Clearly 
$$
\begin{aligned}
\bignorm{\nabla\cL^{-1/2} A}{\lu{4B}} 
& \leq \mu\bigl(4B\bigr)^{1/2} \, \bignorm{\nabla\cL^{-1/2} A}{\ld{4B}} \\
& \leq \Bigl(\frac{\mu\bigl(4B\bigr)}{ \mu(B)}\Bigr)^{1/2} \\
& \leq C.
\end{aligned}
$$
 In the second inequality 
above we have used the fact that 
$$
\bignorm{\nabla\cL^{-1/2} A}{\ld{4B}}
\leq \norm{A}{2} \leq \mu(B)^{-1/2},   
$$
which follows from the $L^2$-boundedness of the Riesz transform and the 
size property of $A$.  In the last inequality we have used the fact that the measure $\mu$ is locally doubling.
Therefore
\begin{equation} \label{f: est first summand}
\sup \bignorm{\nabla\cL^{-1/2} A}{\lu{4B}} 
< \infty,
\end{equation}
where the supremum is taken over all admissible $X^1$-atoms $A$. 

Thus, to conclude the proof of the theorem it suffices to show that
\begin{equation} \label{f: est second summand}
\sup \bignorm{\nabla\cL^{-1/2} A}{\lu{(4B)^c}} 
< \infty,
\end{equation}
where the supremum is taken over all admissible $X^1$-atoms $A$. 
Observe that 
$$
\nabla \cL^{-1/2}A 
= \nabla \cL^{-1/2}\cL \cL^{-1}A 
= \nabla \cL^{1/2} \bigl(\cL^{-1}A\bigr).
$$
Recall that  by Corollary \ref{c: action on atoms},
$$
\bignorm{\cL^{-1}A}{\ld{B}} \le   \frac{1}{\la_1(B)}
\ \mu(B)^{-1/2},
$$
so that 
\begin{equation}
\begin{aligned}\label{f: 0.7}
\bignorm{\cL^{-1}A}{\lu{B}}
& \leq \mu(B)^{1/2} \, \bignorm{\cL^{-1}A}{\ld{B}} \\
& \leq  \frac{1}{\la_1(B)}.
\end{aligned}
\end{equation}
Therefore
\begin{equation*} 
\begin{aligned}  
\norm{\nabla\cL^{-1/2}A}{\lu{(4B)^c}} 
& =    \norm{\nabla\cL^{1/2}\big(\cL^{-1}A\big)}{\lu{(4B)^c}} \\
& \leq C\,  r_B^{-2} \, \bignorm{\cL^{-1}A}{\lu{B}} \\   
& \leq C\,  r_B^{-2}  \, \la_1(B)^{-1} \\   
& \leq C;
\end{aligned}
\end{equation*}
the first inequality follows from Lemma~\ref{l: local-global}, 
the second from (\ref{f: 0.7}), 
and the last from \eqref{f: FK}. 
The proof of the theorem is complete.
\end{proof}

\section{Volume doubling manifolds satisfying Gaussian estimates }
\label{s: RT polynomial case}

The methods developed in 
Sections~\ref{s: IP} and \ref{s: RT exponential case} may be easily adapted to the case where
the manifold $M$ satisfies the following assumptions:
\begin{enumerate}  
\item[\itemno1]
$M$ possesses the \textit{volume doubling property}, i.e., 
there exists a positive constant $D_{\infty}$ such that 
\begin{equation*} \label{f: dou}
\mu(2B)\leq D_{\infty} \, \mu(B)
\quant B \in \cB;   
\end{equation*}
\item[\itemno2]
the heat kernel satisfies a \textit{Gaussian upper estimate}, i.e. there exist positive constants $c, C$ such that 
$$
h_t(x,y)\le \ C\ \frac{1}{\mu(B(y,\sqrt{t}))} \ \e^{-c\frac{d^2(x,y)}{t}}
$$
for all $x,y\in M$ and all $t>0$.
\end{enumerate}
\par
Note that, under the assumptions above on $M$, T.~Coulhon
and X.T.~Duong \cite{CD} proved that the Riesz transform is of weak type $(1,1)$.
The Marcinkiewicz interpolation argument, together with the 
trivial $L^2$ bound for the Riesz transform imply, for every $p$ in $(1,2)$,
the estimate
\begin{equation}  \label{f: Lp est RT}
\bignorm{{\nabla \cL^{-1/2}f}}{p}
\leq C_p \, \norm{f}{p}
\quant f \in \lp{M}.
\end{equation}
Let  $\Xu{M}$ be the space defined much as in the case of manifolds of exponential
growth, but allowing $X^1$-atoms  associated to balls of any positive radius.
We refer the reader to \cite{S} for all basic
properties of $\Xu{M}$.

\begin{theorem}\label{t: d+FK}
Suppose that $M$ is a Riemannian manifold satisfying the volume doubling property and the Gaussian upper estimate.
Then the  imaginary powers $\cL^{iu}$, $u\in \BR$, and the Riesz transform $\nabla\cL^{-1/2}$,  are bounded from $\Xu{M}$ to $\lu{M}$.
\end{theorem}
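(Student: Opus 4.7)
We adapt the arguments of Theorem~\ref{t: IP} and of the Riesz transform theorem of Section~\ref{s: RT exponential case}. By the atomic theory of \cite{MMV3} it suffices to prove uniform $L^1$ bounds for $\cL^{iu}A$ and for $\nabla\cL^{-1/2}A$ as $A$ ranges over all $X^1$-atoms, which here are allowed at arbitrary scale. Two geometric ingredients replace the bounded-geometry ones: first, Grigor'yan's theory \cite{Gr2} yields the relative Faber--Krahn inequality $\la_1(B) \geq c\, r_B^{-2}$ for every $B\in\cB$; second, decomposing $(4B)^c$ into dyadic annuli and applying the doubling property gives a cleaner variant of Lemma~\ref{l: est exp}:
$$
\int_{(4B)^c} \e^{-d(x,y)^2/Dt} \wrt \mu(x)
\leq C\, \mu\bigl(B(y,\sqrt{t})\bigr) \, \e^{-c r_B^2/t},
\qquad t \in (0, r_B^2].
$$

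With these tools, the proof of Theorem~\ref{t: IP} transfers essentially verbatim. Takeda's inequality together with doubling produces the analogue of \eqref{f: conseq Takeda}; the kernel of $\cL^{iu+1}$ is then integrated by splitting at $t=r_B^2$, the far tail being controlled by $L^\infty$-contractivity of $\cH_t$ and the near tail by the Takeda bound. The resulting factor $r_B^{-2}$ is absorbed by $\la_1(B)$ via the relative Faber--Krahn inequality.

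For the Riesz transform the main task is to establish the analogue of Lemma~\ref{l: local-global}. The Gaussian upper estimate, combined with \cite[Theorem~15.8]{Gr2} and \cite[Theorem~1.1]{Gr3}, gives the weighted heat kernel estimates $E_0(y,t)\leq C/\mu(B(y,\sqrt{t}))$ and $E_1(y,t)\leq C/\bigl(\mu(B(y,\sqrt{t}))\,t\bigr)$ for \emph{all} $t>0$. This global validity is the key advantage over the exponential case: it sidesteps the parabolic Harnack inequality used in the $\cI_B(y)$ estimate of Lemma~\ref{l: local-global}. Indeed, Schwarz's inequality combined with the annular integral above and with the bound on $E_1$ yields
$$
\int_{(4B)^c} \mod{\nabla_x h_t(x,y)} \wrt\mu(x) \leq C\, t^{-1/2}\, \e^{-c r_B^2/(2t)}
$$
uniformly in $y\in B$. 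Integrating this against $t^{-3/2}\wrt t$ separately on $(0,r_B^2]$ and on $(r_B^2,\infty)$, and performing the substitution $s=r_B^2/t$, delivers a bound $\leq C\, r_B^{-2}$. Together with $\la_1(B)^{-1}\leq C\, r_B^2$, this closes the argument exactly as in Section~\ref{s: RT exponential case}.

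The main technical point is to verify that Grigor'yan's integral gradient estimate continues to apply under the present hypotheses; its assumption reduces precisely to the on-diagonal heat kernel bound $h_t(x,x)\leq C/\mu(B(x,\sqrt{t}))$, which is equivalent to the Gaussian upper estimate in the presence of doubling. Once this is in place, the annular decomposition of Lemma~\ref{l: est exp} collapses to a straightforward dyadic sum governed by $D_\infty$, and no further obstruction arises in the transfer of the proofs.
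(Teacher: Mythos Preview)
Your argument is correct, and for the imaginary powers it coincides with the paper's: both replace the Faber--Krahn inequality \eqref{f: FK} by the relative Faber--Krahn inequality (which under doubling and the Gaussian upper bound yields $\la_1(B)\geq c\,r_B^{-2}$ for all $B\in\cB$), and the rest of Section~\ref{s: IP} carries over verbatim. One bibliographic remark: in the doubling setting the reduction to uniform $L^1$ bounds on atoms is justified by \cite{S}, not \cite{MMV3}.

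For the Riesz transform your route genuinely diverges from the paper's. The paper retains the architecture of Lemma~\ref{l: local-global}, estimating $\cI^B(y)$ via Grigor'yan's weighted gradient bound as in Step~II, and estimating $\cI_B(y)$ via Caccioppoli plus a parabolic mean--value step; the only modification is that the parabolic Harnack inequality used in \eqref{f: upH} (which is unavailable without a lower heat--kernel bound) is replaced by the Li--Wang reverse H\"older inequality for nonnegative subsolutions \cite{LW}. You instead observe that under doubling and the Gaussian upper estimate the weighted integrals $E_0(y,t)\leq C/\mu\bigl(B(y,\sqrt t)\bigr)$ and hence $E_1(y,t)\leq C/\bigl(t\,\mu(B(y,\sqrt t))\bigr)$ hold for \emph{all} $t>0$, not merely for $t\leq r_B^2$; combining this with the doubling annular bound $\int_{(4B)^c}\e^{-d(x,y)^2/Dt}\wrt\mu\leq C\,\mu\bigl(B(y,\sqrt t)\bigr)\,\e^{-cr_B^2/t}$ and Schwarz gives $\int_{(4B)^c}\mod{\nabla_x h_t(x,y)}\wrt\mu(x)\leq C\,t^{-1/2}\e^{-cr_B^2/t}$ uniformly in $t$, so both $\cI^B$ and $\cI_B$ are handled by a single integration. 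This is strictly more economical: it dispenses with the partition of unity, the parabolic cylinders, Caccioppoli's inequality, and the Li--Wang lemma altogether. What the paper's approach buys is portability: the Caccioppoli--plus--mean--value scheme mirrors the exponential--growth proof and isolates exactly which ingredient (Harnack) fails and what replaces it, whereas your shortcut relies on the fact that the $E_0$, $E_1$ machinery is globally effective only because the Gaussian upper bound beats the volume growth at all scales---precisely what fails in the exponential setting and forced the Step~III detour there.
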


The proof of Theorem \ref{t: d+FK} is an adaptation of the  arguments described in the previous sections. The main modifications are 
\begin{itemize}
\item[\rmi] the replacement of the Faber-Krahn inequality (\ref{f: FK}) with the  \textit{relative Faber-Krahn inequality}: there exist positive constants $b$ and $\nu$ such that 
\begin{equation*} \label{f: RFK}
\la_1(U) \geq \frac{b}{r_B^2} \, \left(\frac{\mu(B)}{\mu(U)}\right)^{2/\nu}
\end{equation*}
for every ball $B$ in $\cB$ and for every relatively compact open set $U\subset B$.\par
It is well known that manifolds that possess the volume doubling property satisfy the relative Faber--Krahn inequality if and only if  the heat kernel satisfies a {Gaussian upper estimate} \cite{Gr1}.
\item[\rmii] The replacement of the uniform parabolic Harnack inequality  in the proof of inequality (\ref{f: upH}) by the following reverse H\"older inequality
for subsolutions
of the heat equation: there exists a constant $C$ such that for all integer $j$ and $k$ with $k\geq 2$
\begin{equation*} 
\int_{2\cC_{j,k}} h_t(x,y)^2 \wrt \mu(x) \wrt t 
\leq \frac{C}{\mu\times \la \bigl(4\cC_{j,k}\bigr)} 
\Bigl[\int_{4\cC_{j,k}} h_t(x,y) \wrt \mu(x) \wrt t \Bigr]^2.
\end{equation*}
To the best of our knowledge, this inequality is due to 
P.~Li and J.~Wang (see the proof of \cite[Theorem~2.1, p.~1269--1270]{LW}).

\end{itemize}

By combining Theorem \ref{t: d+FK} with the interpolation result in \cite{S}, one obtains  \eqref{f: Lp est RT}.  
Thus, we give a different proof of one of 
the main results obtained by Coulhon and Duong.

The result of Theorem \ref{t: d+FK} is not new. Indeed, it can be shown using  the results of \cite{HLMMY}  that if the manifold $M$ is doubling and the heat kernel satisfies a Gaussian upper estimate, then the space $X^1(M)$ coincides with the subspace of $0$-forms in the space $H^1(T^*\Lambda)$ introduced by P. Auscher, A. McIntosh, and E. Russ in \cite{AMR}. Hence the boundedness of the Riesz transform from $\Xu{M}$ to $\lu{M}$ follows from \cite[Theorem 5.13]{AMR} and that of the imaginary powers from \cite[Corollary 4.3]{DY}. However, we believe that the proofs outlined here might be of some interest for their simplicity.

\end{document}